\numberwithin{equation}{section}
\theoremstyle{plain}
\newtheorem{theorem}{Theorem}[section]
\newtheorem{lemma}[theorem]{Lemma}
\newtheorem{observation}[theorem]{Observation}
\newtheorem{proposition}[theorem]{Proposition}
\newtheorem{question}[theorem]{Questions}
\theoremstyle{definition}}
\newcommand{\zit}[1]{(\ref{#1})}
\def\sing{\operatorname{Sing}}
\def\T{ \mathbb T}
\def\R{ \mathbb R}
\def\H{H^\infty}
\def\D{{ \mathbb D}}
\def\C{{ \mathbb C}}
\def\N{{ \mathbb N}}
\def\e{\varepsilon}
\def\union{\cup}
\def\Union{\bigcup}
\def\inter{\cap}
\def\ov{\overline}
\def\ss{\subseteq}
\def\emp{\emptyset}
\def\buildrel#1_#2^#3{\mathrel{\mathop{\kern 0pt#1}\limits_{#2}^{#3}}}
\def\BP{Blaschke product}
\def\IBP{interpolating Blaschke product}
\begin{document}

\title []{One-component inner functions II}


 \author{Joseph Cima}
\address{\small Department of Mathematics,
UNC,
Chapel Hill, North-Carolina,USA}
\email{cima@email.unc.edu}

 \author{Raymond Mortini}
  \address{
Universit\'{e} de Lorraine\\
 D\'{e}partement de Math\'{e}matiques et  
Institut \'Elie Cartan de Lorraine,  UMR 7502\\
 3, rue Augustin Fresnel\\
 F-57073 Metz} 
 \email{raymond.mortini@univ-lorraine.fr}

\subjclass{Primary 30J10; Secondary 30J05; }

\keywords{inner functions; interpolating Blaschke products; connected components; level sets}

\begin{abstract}
We continue our study of the  set $\mathfrak I_c$ of inner functions $u$ in $\H$ with the property that
 there is $\eta\in ]0,1[$
such that the level set $\Omega_u(\eta):=\{z\in\D: |u(z)|<\eta\}$ is connected.  
These  functions are called one-component inner functions.
Here we  show that the composition of two one-component inner functions is again in $\mathfrak I_c$. 
We also give conditions under which a factor of  one-component inner function
 belongs to $\mathfrak I_c$.
\end{abstract}

 \maketitle

\centerline {\small\the\day.\the \month.\the\year} \medskip

\section{Introduction}

One-component inner functions, the collection of which we denote by $\mathfrak I_c$, were first studied by B. Cohn \cite{coh}  in connection with embedding theorems and Carleson-measures.
Recall that an inner function $u$ in $\H$ is said to be a
 {\it one-component inner function} if there is $\eta\in ]0,1[$
such that the level set (also called sublevel set or filled level set) $\Omega_u(\eta):=\{z\in\D: |u(z)|<\eta\}$ is connected. Unimodular constants are considered to belong to $\mathfrak I_c$.
It was shown in \cite[p. 355] {coh} for instance, that arclength on $\{z\in\D: |u(z)|=\e\}$ is  a Carleson  measure whenever 
$$\Omega_u(\eta)=\{z\in\D: |u(z)|<\eta\}$$
is connected and $\eta<\e<1$.  
A detailed  study of the class $\mathfrak I_c$ was undertaken by  A.B. Aleksandrov \cite{alex}.
Classes of explicit examples of one-component inner functions were given by the present authors in \cite{ci-mo}. The most fundamental ones are finite Blaschke products and singluar inner functions $S_\mu$  with finite singularity set (or spectrum), $\sing  S_\mu$. Infinite interpolating Blaschke products with real zeros $(x_n)$ satisfying $0<\eta_1\leq \rho(x_n,x_{n+1})\leq \eta_2<1$ (where $\rho$ is the pseudohyperbolic distance in $\D$)  were also shown to belong to $\mathfrak I_c$.
On the other hand, no finite product of thin interpolating Blaschke products
(these are (infinite) Blaschke products $B$ whose zeros $(z_n)$   satisfy $\lim_n\prod_{k:k\not=n}\rho(z_n,z_k)=1$),  can be in $\mathfrak I_c$.
It also turned out the class of  one-component inner functions is invariant under taking finite products.
In the present note, we are considering when a factor of a one-component inner function is in 
$\mathfrak I_c$ again.  A sufficient criterion is provided. On the other hand, as it is shown,  there  exist two non one-component inner functions $u$ and $v$ such that $uv\in \mathfrak I_c$.
Our main result will show that the class of  one-component inner functions is also invariant under taking
compositions, generalizing special cases dealt with in \cite{ci-mo}.  The results of this note stem from December 2016. Meanwhile (Mai 2018) a manuscript by A. Reijonen \cite{rei}  provides other classes of one-component inner functions.

\section{Main tools}

Our results will mainly be based on the following  known results which we recall for citational reasons.
 
\begin{lemma}\label{compos1}
Given a non-constant  inner function $u$ in $\H$ and $\eta\in \;]0,1[$, 
let  $\Omega:=\Omega_u(\eta)=\{z\in\D: |u(z)|<\eta\}$ be a level set. Suppose that $\Omega_0$ is a
 component (=maximal connected subset)  of $\Omega$.
 Then
\begin{enumerate}
\item[(1)]  $\Omega_0$  is a simply connected domain; that is, $\C\setminus \Omega_0$ has no bounded components.
\item[(2)]  $\inf_{\Omega_0} |u|=0$.
\item[(3)] Either $\ov{\Omega_0}\ss \D$ or $\ov{\Omega_0}\inter \T$ has measure zero.
\end{enumerate}
\end{lemma}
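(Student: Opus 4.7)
For Part (1), the plan is to verify the criterion that every closed curve $\gamma\subset\Omega_0$ has its ``filled hull'' $\hat\gamma$ (the union of $\gamma$ with the bounded components of $\C\setminus\gamma$) contained in $\Omega_0$. Since $\gamma$ is a compact subset of $\D$ and $\C\setminus\ov\D$ is connected and unbounded, the unbounded component of $\C\setminus\gamma$ contains $\C\setminus\ov\D$; hence $\hat\gamma\ss\D$. For each bounded component $V$ of $\C\setminus\gamma$ the boundary $\partial V$ lies in $\gamma$, so $|u|<\eta$ on $\partial V$; the maximum modulus principle (with $u$ non-constant) then gives $|u|<\eta$ throughout $V$, i.e.\ $V\ss\Omega$. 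Since $V\cup\gamma$ is connected and meets $\Omega_0$, maximality of the component forces $V\ss\Omega_0$, so $\hat\gamma\ss\Omega_0$, showing that $\C\setminus\Omega_0$ has no bounded component.

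For Part (2), I would use the Riemann map $\varphi:\D\to\Omega_0$ furnished by Part (1) and consider $\tilde u:=(u\circ\varphi)/\eta:\D\to\D$. The key step is to show that $\tilde u$ is a non-constant inner function. At a.e.\ $\zeta\in\T$, the nontangential limit $\varphi^*(\zeta)$ exists and lies in $\ov{\Omega_0}$; if $\varphi^*(\zeta)\in\partial\Omega_0\cap\D$, then $|u(\varphi^*(\zeta))|=\eta$, yielding $|\tilde u^*(\zeta)|=1$, while the set $\{\zeta:\varphi^*(\zeta)\in\T\}$ must be Lebesgue-null, since otherwise $|\tilde u^*|=1/\eta>1$ on a set of positive measure, contradicting $\|\tilde u\|_\infty\leq 1$. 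Non-constancy of $\tilde u$ follows from the injectivity of $\varphi$ together with the non-constancy of $u$. An inner non-constant function carries either a Blaschke zero in $\D$ or a non-trivial singular inner factor; both yield $\inf_\D|\tilde u|=0$, and pulling back via $\varphi$ gives $\inf_{\Omega_0}|u|=0$.

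For Part (3), the plan is to show that every $\zeta\in\ov{\Omega_0}\cap\T$ lies in the exceptional set $\{\zeta\in\T:|u^*(\zeta)|\neq 1\text{ or }u^*(\zeta)\text{ fails to exist}\}$, which has Lebesgue measure zero for inner $u$ by Fatou's theorem. Given such $\zeta$ and a sequence $z_n\in\Omega_0$ with $z_n\to\zeta$, the openness and path-connectedness of $\Omega_0$ allow us to concatenate arcs in $\Omega_0$ between consecutive $z_n$ and, after removing loops, produce a Jordan arc $\gamma:[0,1)\to\Omega_0$ with $\gamma(t)\to\zeta$. Along $\gamma$ we have $|u|<\eta$, so the cluster set $C_\gamma(u,\zeta)$ is a non-empty compact subset of $\{|w|\leq\eta\}$; extracting an asymptotic value $w$ at $\zeta$ from this cluster set (via a suitable reparametrisation of $\gamma$) and applying Lindel\"of's uniqueness principle shows that, if the nontangential limit $u^*(\zeta)$ exists, then $w=u^*(\zeta)$. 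Since $|w|\leq\eta<1$, this rules out $|u^*(\zeta)|=1$, and the case $\ov{\Omega_0}\ss\D$ corresponds to $\ov{\Omega_0}\cap\T=\emp$.

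The main obstacle lies in Part (3): the rigorous passage from ``$u$ is bounded by $\eta$ along a Jordan arc to $\zeta$'' to ``$u$ possesses an asymptotic value of modulus $\leq\eta$ at $\zeta$'' needs care, since $u$ may oscillate along the arc and only a subsequence of $u(\gamma(t_n))$ converges a priori. Parts (1) and (2), by contrast, are clean applications of the maximum modulus principle and the Riemann mapping combined with inner-outer factorization.
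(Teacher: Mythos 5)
The paper does not actually prove this lemma: it is recalled ``for citational reasons'', with parts (1) and (2) referred to \cite{ci-mo} and part (3) to \cite[p.~733]{be}. So your proposal must stand on its own. Part (1) is correct and standard. Parts (2) and (3) each contain a genuine gap.

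In part (2), the claim that $\{\zeta\in\T:\varphi^*(\zeta)\in\T\}$ is Lebesgue-null ``since otherwise $|\tilde u^*|=1/\eta>1$'' is fallacious: $\varphi$ maps $\D$ into $\Omega_0$, where $|u|<\eta$ by definition, so $|\tilde u|<1$ everywhere on $\D$ and every existing radial limit of $\tilde u$ has modulus at most $1$; no contradiction with $\|\tilde u\|_\infty\le 1$ can arise. (You are implicitly composing the a.e.\ unimodular boundary values of $u$ with those of $\varphi$, but the path $r\mapsto\varphi(r\zeta)$ approaches $\varphi^*(\zeta)\in\T$ from \emph{inside} the sublevel set.) The set is indeed null, but the honest reason is conformal invariance of harmonic measure combined with part (3): its normalized measure equals the harmonic measure of $\ov{\Omega_0}\inter\T$ in $\Omega_0$ at $\varphi(0)$, which is dominated by harmonic measure in $\D$, i.e.\ by the Lebesgue measure of $\ov{\Omega_0}\inter\T$, and that is zero by (3). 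So (2) as written depends on (3). A cleaner route, consistent with the tools the paper itself invokes in Proposition~\ref{main}, avoids the Riemann map: if $\inf_{\Omega_0}|u|=\delta>0$, then $\log(|u|/\eta)$ is a bounded nonpositive harmonic function on $\Omega_0$ with boundary limit $0$ at every point of $\partial\Omega_0\inter\D$ and exceptional boundary set $\ov{\Omega_0}\inter\T$ of harmonic measure zero; the maximum principle with exceptional points (\cite[p.~729]{be}, \cite{gar}) forces $|u|\equiv\eta$, a contradiction.

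In part (3) there are two problems, of which you flagged only the lesser. (a) Accessibility: from $z_n\in\Omega_0$ with $z_n\to\zeta$ one cannot in general produce an arc in $\Omega_0$ converging to $\zeta$; boundary points of a connected plane domain need not be accessible (comb domains), and concatenating arcs between consecutive $z_n$ may yield a curve whose end cluster set is a nondegenerate continuum rather than the single point $\zeta$. Your argument therefore only shows that the \emph{accessible} points of $\ov{\Omega_0}\inter\T$ form a null set; the non-accessible points are untouched, and handling them is precisely where Berman's proof does its real work. (b) The Lindel\"of step: along the arc you only control $|u|\le\eta$, not an asymptotic value, so Lindel\"of's theorem does not apply directly; this is repairable via the two-constants theorem in $\D\inter D(\zeta,r)$ cut along the arc, which gives $\limsup |u|\le \eta^{s}<1$ in each Stolz angle for some $s>0$, after which Fatou's theorem finishes. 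Because of (a), the proposal does not establish (3), and since your (2) ultimately needs (3), only part (1) is fully proved.
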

A detailed proof of parts (1) and (2)   is given in \cite{ci-mo}; part (3) is in \cite[p. 733]{be}.

Recall that the spectrum $\sing(u)$ of an inner function  $u$ is the set of all boundary points $\zeta$ for which 
$u$ does not admit a holomorphic extension; or equivalently, for which  $Cl(u,\zeta)=\ov\D$,
where 
$$\mbox{$Cl(u,\zeta)=\{w\in\C: \exists (z_n)\in\D^\N,~ \lim z_n=\zeta$ and $ \lim u(z_n)=w\}$}$$ 
is the cluster set of 
$u$ at $\zeta$ (see \cite[p. 80]{ga}).

The pseudohyperbolic disk of center $z_0\in \D$ and radius $r$ is denoted by $D_\rho(z_0,r)$.

\begin{theorem}[Aleksandrov] \label{alexi}\cite[Theorem 1.11 and Remark 2, p. 2915]{alex}
Let $u$ be an inner function. The following assertions are equivalent:
\begin{enumerate}
\item[(1)] $u\in \mathfrak I_c$.
\item[(2)] There is a constant $C>0$ such that for every $\zeta\in \T\setminus \sing(u)$ we have
$$i)~~~  |u '' (\zeta)|\leq C\; |u' (\zeta)|^2,$$
and 
$$ii)~~~ \mbox{$\liminf_{r\to 1} |u(r\zeta)|<1$ for all $\zeta\in \sing(u)$}.$$
\end{enumerate}
\end{theorem}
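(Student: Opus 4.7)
The plan is to prove each implication separately. The forward direction $(1)\Rightarrow(2)$ is the more direct one; the converse is where the analytic bounds must be transformed into the topological statement of connectedness of some level set $\Omega_u(\eta)$.

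For $(1)\Rightarrow(2)(ii)$, fix $\zeta\in\sing(u)$ and recall that $Cl(u,\zeta)=\overline{\D}$. Then there exists a sequence $z_n\to\zeta$ with $u(z_n)\to 0$; these points eventually lie in the connected set $\Omega:=\Omega_u(\eta)$. By Lemma~\ref{compos1}(3), $\overline{\Omega}\cap\T$ has measure zero, yet $\zeta\in\overline{\Omega}\cap\T$, so $\Omega$ is a simply connected domain with measure-zero boundary contact on $\T$ accumulating at $\zeta$. A Lindel\"of-type accessibility argument in $\Omega$ shows that it admits a curve landing at $\zeta$; a Phragm\'en--Lindel\"of majorization in a Stolz angle then promotes this to the radial statement (possibly after slightly enlarging $\eta$). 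For $(1)\Rightarrow(2)(i)$, fix $\zeta\in\T\setminus\sing(u)$. By Schwarz reflection $u$ extends analytically across $\T$ near $\zeta$, so $u'(\zeta)$ and $u''(\zeta)$ are classical derivatives. The ratio $u''(\zeta)/u'(\zeta)^2$ has a scale-invariant geometric meaning as the curvature of the preimage curve $u^{-1}(\{|w|=\eta\})$ at its point closest to $\zeta$. The connectedness of $\Omega_u(\eta)$ provides a uniform lower bound on the distance from $\zeta$ to this curve, and a Cauchy estimate over a disk of proportional Euclidean radius yields the desired bound on $u''(\zeta)$.

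For the reverse direction $(2)\Rightarrow(1)$, the plan is to choose $\eta$ close to $1$ and show $\Omega_u(\eta)$ is connected directly. Condition (ii) ensures every point of $\sing(u)$ is radially accessible from $\{|u|<\eta\}$, preventing the singular set from disconnecting components across $\T$. Condition (i) is equivalent to a scale-invariant Lipschitz estimate on $\log u'$: since $|u''/u'|\le C|u'|$, the value of $|u'|$ changes only by a bounded factor over Euclidean distances of order $1/|u'|$. This forces the smooth level curves $\{|u|=\eta\}$ to have bounded curvature in a scale-invariant sense, and hence a controlled number of components in Carleson boxes near $\T$. Given two points in $\{|u|<\eta\}$, one then constructs a connecting path by pushing each toward $\T$ along its own component and then traveling in a thin boundary layer, using (i) to stay below level $\eta$ and (ii) to transit past singular points.

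The principal obstacle is the converse direction: translating the pointwise analytic inequality in (i) at regular boundary points into a global topological property of the sublevel set. This typically requires harmonic-measure estimates or a Carleson-box decomposition to count and glue the components of $\Omega_u(\eta)$, and is delicate because (i) controls behavior only on $\T\setminus\sing(u)$ while the decisive topology often occurs near $\sing(u)$, where only condition (ii) is available.
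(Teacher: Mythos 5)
This statement is not proved in the paper at all: it is Aleksandrov's theorem, quoted with a citation to \cite{alex} and used as a black box throughout. So there is no internal proof to compare against; the question is whether your sketch stands on its own, and it does not. In the direction $(1)\Rightarrow(2)$, both halves rest on assertions that are exactly the substantive content of Aleksandrov's argument. For (ii), producing a sequence $z_n\to\zeta$ with $u(z_n)\to 0$ inside the connected level set is easy, but passing from a curve in $\Omega_u(\eta)$ landing at $\zeta$ to the \emph{radial} statement $\liminf_{r\to1}|u(r\zeta)|<1$ is the whole difficulty: for a general inner function the sublevel set can approach a singular point purely tangentially while $|u(r\zeta)|\to 1$, so ``a Phragm\'en--Lindel\"of majorization in a Stolz angle'' is not a proof but a restatement of what must be shown, namely that one-componentness forces non-tangential approach. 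For (i), the claim that connectedness of $\Omega_u(\eta)$ ``provides a uniform lower bound on the distance from $\zeta$ to this curve'' is false as stated --- that distance tends to $0$ as $\zeta$ approaches $\sing(u)$. What is actually needed is the scale-invariant estimate $\dist(\zeta,\Omega_u(\eta))\geq c/|u'(\zeta)|$ together with the reflection bound $|u|\leq 1/\eta$ on the part of the disk of radius $c/|u'(\zeta)|$ about $\zeta$ lying outside $\ov\D$, so that a Cauchy estimate yields $|u''(\zeta)|\leq C|u'(\zeta)|^2$. That distance estimate is the key lemma of \cite{alex}, and you have not supplied it.

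The converse $(2)\Rightarrow(1)$ is where the write-up stops being a proof altogether: ``pushing each point toward $\T$ along its own component and then traveling in a thin boundary layer'' is a programme, not an argument, and you yourself flag it as the principal obstacle. Nothing in the sketch explains how the pointwise inequality (i) at regular boundary points, combined with the radial condition (ii) at singular points, actually glues the components of $\{|u|<\eta\}$ for $\eta$ close to $1$; this requires the quantitative comparability of $1-|u(z)|$ with $|u'|$ and the distance to the boundary spectrum, plus harmonic-measure or Carleson-box estimates, all of which are developed in \cite{alex} and absent here. Since the paper deliberately imports this theorem rather than proving it, the honest options are either to cite it (as the authors do) or to reproduce Aleksandrov's argument in full; the present sketch does neither.
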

Note that, due to this theorem,  $u\in \mathfrak I_c$ necessarily implies 
that $\sing(u)$ has measure zero.

\section{Splitting off factors}
In this section we give a condition under which a factor of a one-component inner function is in
$\mathfrak I_c$ again. Recall from \cite{ci-mo} that for the atomic inner function 
$S(z)=\exp(-\frac{1+z}{1-z})$ and a thin Blaschke product with positive zeros,  
$SB\in \mathfrak I_c$, but not $B$.
For $a\not=0$, let 
$$\phi_a(z)=\frac{|a|}{a}\frac{a-z}{1-\ov az}$$
and $\phi_0(z)=z$. A Blaschke product  $B$ is written as $ B=e^{i\theta}\,\prod_{j=1}^\infty \phi_{a_j}$, where
${\sum_{j=1}^\infty (1-|a_j|)<\infty}$, each $a_j$ appearing as often as its multiplicity needs.
The following result tells us that one can split off finitely many zeros without leaving the class
of one-component inner functions. Any inner function $u$ has the form $u= BS_\mu$,
where $B$ is a Blaschke product and $S_\mu$ a singular inner function 
$$S_\mu(z):=\exp\left(-\int_\T \frac{\zeta+z}{\zeta-z}\;d\mu(\zeta)\right)$$
associated with a positive Borel measure $\mu$ which is singular with respect to Lebesgue measure on $\T$.

\begin{proposition} Let $\Theta\in \mathfrak I_c$ and $a\in \D$. If $\Theta(a)=0$, then 
$v:=\Theta/\varphi_a\in \mathfrak I_c$. 
\end{proposition}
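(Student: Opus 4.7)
The plan is to verify the two criteria of Aleksandrov's Theorem~\ref{alexi} for $v$. The case when $v$ is a unimodular constant is trivial, so assume $v$ is non-constant. Since $\varphi_a$ is analytic on $\C\setminus\{1/\overline a\}$ and its only zero on $\overline{\D}$ (the simple zero at $a$) is cancelled by the zero of $\Theta$, one has $\sing v=\sing\Theta$. Condition~(ii) for $v$ is then immediate: for $\zeta\in\sing v=\sing\Theta$,
$$\liminf_{r\to 1^-}|v(r\zeta)|=\liminf_{r\to 1^-}\frac{|\Theta(r\zeta)|}{|\varphi_a(r\zeta)|}=\liminf_{r\to 1^-}|\Theta(r\zeta)|<1,$$
using $|\varphi_a(r\zeta)|\to 1$.

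For condition~(i), I parametrize inner functions on regular arcs by $u(e^{i\theta})=e^{i\psi_u(\theta)}$; a direct computation yields $|u'(\zeta)|=\psi_u'(\theta)$ and $|u''(\zeta)|^2=\bigl((\psi_u')^2-\psi_u'\bigr)^2+(\psi_u'')^2$. From $\Theta=v\varphi_a$ one gets $\psi_\Theta'=\psi_v'+\psi_a'$ and $\psi_\Theta''=\psi_v''+\psi_a''$. The M\"obius factor contributes uniformly bounded quantities on $\T$: $\psi_a'(\theta)=(1-|a|^2)/|e^{i\theta}-a|^2$ lies in $[m_a,M_a]$ for some $0<m_a\le M_a$, and $|\psi_a''|\le M_a''$.

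The main obstacle is the uniform lower bound $\psi_v'\ge m>0$ on $\T\setminus\sing v$, which is exactly what~(i) for $v$ requires: $|v''|\le C|v'|^2$ forces $|\psi_v'-1|\le C\psi_v'$, and hence $\psi_v'\ge 1/(1+C)$. Pointwise $\psi_v'>0$ at every regular point because $v$ is a non-constant inner function (each factor of its Blaschke--singular decomposition contributes non-negatively, at least one strictly positive). To upgrade this to a uniform bound I use~(i) for $\Theta$ in the form $|(1/\psi_\Theta')'|\le C$, so $1/\psi_\Theta'$ is Lipschitz on each arc of $\T\setminus\sing\Theta$ and admits a finite one-sided limit $L\ge 0$ at every endpoint $e^{i\theta_*}\in\sing\Theta$. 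If $L>0$, then $\psi_\Theta'$ is bounded on the arc and $\Theta$ extends Lipschitz-continuously to $\zeta_*$ along the arc with $|\Theta(\zeta_*)|=1$; standard $H^\infty$ boundary results then give $\lim_{r\to 1}\Theta(r\zeta_*)=\Theta(\zeta_*)$, forcing $\lim_{r\to 1}|\Theta(r\zeta_*)|=1$ and contradicting condition~(ii) for $\Theta$. Thus $\psi_\Theta'\to+\infty$ at $\sing\Theta$, whence $\psi_v'=\psi_\Theta'-\psi_a'\to+\infty$ at $\sing v$; combined with pointwise positivity on compact subsets of $\T\setminus\sing v$, this gives $\inf_{\T\setminus\sing v}\psi_v'>0$.

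With $\psi_v'\ge m>0$ in hand, the remaining estimates are routine. From $|\psi_v''|\le|\psi_\Theta''|+|\psi_a''|\le C(\psi_\Theta')^2+M_a''\le C(\psi_v'+M_a)^2+M_a''$ and $\psi_v'\ge m$ we obtain $|\psi_v''|\le C'(\psi_v')^2$; similarly $|\psi_v'-1|\le C''\psi_v'$ follows from $\psi_v'\ge m$. Combining these yields $|v''(\zeta)|\le C'''|v'(\zeta)|^2$ on $\T\setminus\sing v$, so condition~(i) holds and $v\in\mathfrak I_c$ by Aleksandrov's theorem.
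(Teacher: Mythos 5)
Your proposal is correct in outline but takes a genuinely different route from the paper's. The paper argues entirely at the level of sublevel sets: setting $\delta:=\inf\{|\varphi_a(z)|:|\Theta(z)|=\eta\}$, it shows $\eta<\delta<1$, deduces the sandwich $\Omega_v(\eta)\ss\Omega_\Theta(\eta)\ss\Omega_v(\eta')$ with $\eta':=\eta/\delta<1$ (using the maximum principle on $D_\rho(a,\delta)$), and then rules out any extra component of $\Omega_v(\eta')$ by Lemma \ref{compos1}(2). That argument is elementary and never invokes Aleksandrov's characterization; yours runs through Theorem \ref{alexi} and boundary-derivative identities. Your verification of condition (ii), the identity $|u''|^2=((\psi_u')^2-\psi_u')^2+(\psi_u'')^2$, the reduction of condition (i) to the three estimates $\psi_v'\ge m>0$, $|\psi_v'-1|\le C\psi_v'$, $|\psi_v''|\le C(\psi_v')^2$, and the bookkeeping for the M\"obius factor are all fine.

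The step you call the main obstacle, however, is both harder than necessary and not fully justified as written. First, the uniform bound $\inf_{\T\setminus\sing v}|v'|>0$ is exactly Lemma \ref{bder}(2) of the paper: it holds for \emph{every} inner function with $\sing v\ne\T$ (via Frostman's theorem and the one-term lower bound for $|B'|$), with no appeal to $\Theta\in\mathfrak I_c$; since $\sing v=\sing\Theta$ has measure zero, you get $m>0$ for free. Second, your own derivation has two soft spots. The inference ``$L>0$ implies $\lim_{r\to1}|\Theta(r\zeta_*)|=1$'' does not follow from Lipschitz continuity of the boundary function up to the endpoint of the arc: at a point of $\sing\Theta$ the cluster set is all of $\ov\D$, so one-sided continuity of the boundary restriction controls nothing about the radial behaviour. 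The correct tool is Fatou's lemma applied to $|\Theta'(e^{i\theta})|=\sum_n(1-|a_n|^2)/|a_n-e^{i\theta}|^2+2\int d\mu(\xi)/|\xi-e^{i\theta}|^2$, combined with the Frostman--Ahern--Clark angular-derivative criterion, which does yield a unimodular radial limit at $\zeta_*$ and hence the desired contradiction with condition (ii). Also, passing from per-arc endpoint limits to $\inf_{\T\setminus\sing v}\psi_v'>0$ needs an extra word when $\T\setminus\sing\Theta$ has infinitely many components; it does follow, from the Lipschitz bound $1/\psi_\Theta'\le C\,\dist(\theta,\sing\Theta)$, but you do not say so. Replacing that whole paragraph by a citation of Lemma \ref{bder}(2) makes your proof complete and quite short, at the price of resting on the much deeper Theorem \ref{alexi}, whereas the paper's argument is self-contained and purely geometric.
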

\begin{proof}

Note that  $\Theta=\varphi_a v$. We may assume that $v$ is not constant, otherwise we are done.
Choose $\eta\in \;]0,1[$ so that $\Omega_\Theta(\eta)$ is connected. Let 
$$\delta:=\inf \{|\varphi_a(z)|: |\Theta(z)|=\eta\}.$$
We claim that  $\eta<\delta<1$.  In fact, since the set $L:=\{z\in \D: |\Theta(z)|=\eta\}$ is not empty, 
and $|\varphi|<1$ in $\D$, we see that $\eta<1$. Moreover,
if $z_0\in L$, then 
$$L':=\{|\varphi_a(z)|: |\Theta(z)|=\eta, |\varphi_a(z)|\leq |\varphi_a(z_0)|\}$$
is a compact set in $[0,1]$, and so
$$\inf \{|\varphi_a(z)|: |\Theta(z)|=\eta\}=\inf L'=\min L.'$$
Hence $\delta=|\varphi_a(z_1)|$ for some $z_1\in L$.  Since $v$ is not a unimodular constant, we deduce from $|\Theta(z_1)|=|\varphi_a(z_1)|\; |v(z_1)|$ that $\eta<\delta$.

Consequently,  if $|\Theta(z)|=\eta$, 

\begin{equation}\label{cb-level}
|v(z)|=\frac{|\Theta(z)|}{|\varphi_a(z)|}\leq \frac{\eta}{\delta}:=\eta'<1.
\end{equation}
We claim that
$$\Omega_v(\eta)\ss \Omega_\Theta(\eta)\ss \Omega_v(\eta').$$
Notice that  the first inclusion is obvious.  To verify the second inclusion, let $z_0\in \Omega_\Theta(\eta)$.
We discuss three cases:  $\rho(z_0,a)<\delta, \rho(z_0,a)=\delta$ and $\rho(z_0,a)>\delta$.

To this end, we first note that $D_\rho(a,\delta)\ss \Omega_\Theta(\eta)$.  In fact, 
if $\rho(a,z)=|\varphi_a(z)|< \delta$, then $|\Theta(z)|<\eta$, since otherwise
$\Theta(a)=0$ implies the existence of  $z_0\in D_\rho(a,\delta)$ with $|\Theta(z_0)|=\eta$ and so,
by the definition of $\delta$, $|\varphi_a(z)|\geq \delta$. An obvious contradiction. 

 Hence $|\Theta(z)|\leq \eta$ for $\rho(z,a)=\delta$. Thus \zit{cb-level} holds true for
$z\in \partial D_\rho(a,\delta)$. By the maximum principle, 
$|v(z)|< \eta'$ on  $D_\rho(a,\delta)$. 
If $\rho(z,a)\geq \delta$ and  $|\Theta(z)|<\eta$,  then,
as in \zit{cb-level}, $|v(z)|<\eta'$, too. 
We deduce that $\Omega_\Theta(\eta)\ss \Omega_v(\eta')$.

Now we are able to prove that $\Omega_v(\eta')$ is connected. Assuming the contrary,
there would exist a component $\Omega_1$ of $\Omega_v(\eta')$ distinct (and so disjoint) from that containing the connected set
 $\Omega_\Theta(\eta)$. In particular, $|v|\geq |\Theta|\geq \eta$ on $\Omega_1$.  By Lemma  \ref{compos1},
 $\inf_{\Omega_1} |v| =0$; an obvious contradiction.
 \end{proof}

 The preceding result admits the following generalization.

 \begin{proposition}\label{main}
   Let $u,v$   be two non-constant inner functions and put $ \Theta=uv$. Suppose that 
  \begin{enumerate}
\item[(i)] $\Theta\in \mathfrak I_c$  and that $\eta\in ]0,1[$ is chosen so that $\Omega_\Theta(\eta)$ is connected.
\item[(ii)] $\sigma:=\sup_{|\Theta|=\eta} |v|\in \;]\eta,1[$  (or equivalently, 
$\delta:=\inf_{|\Theta|=\eta} |u|\in \;]\eta,1[$).
 \end{enumerate}
Then $v\in \mathfrak I_c$. The assertion does not necessarily hold if $\sigma=1$ (or, equivalently, if $\delta=\eta$).
\end{proposition}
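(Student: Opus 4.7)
The plan is to mimic the proof of the preceding proposition, with the non-constant inner function $u$ now playing the role that $\varphi_a$ played there. Set $\eta':=\eta/\delta$; hypothesis (ii) guarantees $\eta<\eta'<1$. The two inclusions to target are
\[
\Omega_v(\eta)\ss\Omega_\Theta(\eta)\ss\Omega_v(\eta'),
\]
of which the first is obvious from $|\Theta|=|u||v|\leq|v|$. Once the second inclusion is in hand, the closing argument of the preceding proposition produces connectedness of $\Omega_v(\eta')$: any additional component $\Omega_1$ of $\Omega_v(\eta')$ would be disjoint from $\Omega_\Theta(\eta)$ (which sits inside a single component), hence would satisfy $|v|\geq|\Theta|\geq\eta>0$ throughout, contradicting Lemma~\ref{compos1}(2).

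The substantive step is thus $\Omega_\Theta(\eta)\ss\Omega_v(\eta')$. On the interior boundary $\partial\Omega_\Theta(\eta)\cap\D=\{|\Theta|=\eta\}$ the definition of $\delta$ yields $|u|\geq\delta$, whence $|v|=|\Theta|/|u|\leq\eta/\delta=\eta'$. I would then apply the maximum modulus principle on the (simply connected, by Lemma~\ref{compos1}(1)) domain $\Omega_\Theta(\eta)$ to the bounded analytic function $v$ in order to push this bound into the interior.

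The main technical obstacle is that the boundary of $\Omega_\Theta(\eta)$ may reach the unit circle. Here one invokes Lemma~\ref{compos1}(3): either $\ov{\Omega_\Theta(\eta)}\ss\D$, in which case the classical maximum principle applies at once, or $\ov{\Omega_\Theta(\eta)}\inter\T$ has Lebesgue measure zero. In the latter case I would run a standard Phragm\'en--Lindel\"of argument on the subharmonic function $\log|v|$: it is bounded above by $0$, its $\limsup$ at every point of $\partial\Omega_\Theta(\eta)\cap\D$ is at most $\log\eta'$, and the residual part of the boundary, lying on $\T$ and having Lebesgue (hence harmonic) measure zero, can be discarded. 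This yields $|v|\leq\eta'$ on $\Omega_\Theta(\eta)$, with strict inequality in the interior by the strict maximum principle since $v$ is non-constant.

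For the negative assertion I would revisit the example recalled just before the preceding proposition: take $u=S$, the atomic singular inner function with singularity at $1$, and $v=B$, a thin Blaschke product with positive zeros accumulating at $1$. Then $uv=SB\in\mathfrak I_c$ while $B\notin\mathfrak I_c$. Since $|S(z)|$ can be made arbitrarily small as $z$ tends radially to $1$, while $|B|$ remains close to $1$ along curves avoiding the zeros, the equation $|SB|=\eta$ admits solutions with $|B|$ arbitrarily close to $1$, forcing $\sigma=1$ and exhibiting the failure of the conclusion in the borderline case.
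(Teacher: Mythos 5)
Your argument is correct, and while the overall skeleton is the same as the paper's (establish the sandwich $\Omega_v(\eta)\ss\Omega_\Theta(\eta)\ss\Omega_v(\sigma)$ and then rule out extra components of $\Omega_v(\sigma)$ via Lemma \ref{compos1}(2), with the same $S\cdot(\text{thin Blaschke product})$ example for the borderline case), your proof of the central inclusion $\Omega_\Theta(\eta)\ss\Omega_v(\sigma)$ is genuinely more direct than the one in the paper. The paper first proves the auxiliary inclusion $\Omega_u(\delta)\ss\Omega_\Theta(\eta)$ by a connectedness argument (a component of $\Omega_u(\delta)$ meeting both $\{|\Theta|<\eta\}$ and its complement must cross $\{|\Theta|=\eta\}$, where $|u|\geq\delta$ gives a contradiction), then applies the asymptotic maximum principle component-by-component on $\Omega_u(\delta)$ to get $\Omega_u(\delta)\ss\Omega_v(\sigma)$, and finally splits $\Omega_\Theta(\eta)$ according to whether $|u|<\delta$ or $|u|\geq\delta$. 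You instead apply the asymptotic maximum principle once, directly to $\log|v|$ on the connected domain $\Omega_\Theta(\eta)$: on $\partial\Omega_\Theta(\eta)\inter\D\ss\{|\Theta|=\eta\}$ one has $|v|\leq\sigma=\eta/\delta$ (indeed this is immediate from the definition of $\sigma$, without passing through $u$ at all), and the residual boundary on $\T$ has measure zero by Lemma \ref{compos1}(3), so it is negligible for harmonic measure --- exactly the ``maximum principle with exceptional points'' the paper itself invokes from Berman and Gardiner. This shortcut is legitimate and trims the case analysis; what it forgoes is the extra information $\Omega_u(\delta)\ss\Omega_\Theta(\eta)\inter\Omega_v(\sigma)$ that the paper records along the way. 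Two cosmetic remarks: simple connectivity of $\Omega_\Theta(\eta)$ plays no role in the maximum principle and can be dropped from your statement; and for the counterexample the clean justification that $\sigma=1$ is the contrapositive of the main assertion (as the paper notes), since $v\notin\mathfrak I_c$ forces $\sigma=1$ --- your direct heuristic about $|S|$ and $|B|$ near $1$ is plausible but not needed.
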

\begin{proof}
Due to  hypothesis  (ii),   we have the following estimate on $|\Theta|=\eta$:
\begin{equation}\label{defdelta}
|u|= \frac{|\Theta|}{|v|}\geq \frac{\eta}{\sigma}=\delta.
\end{equation}
Note that $\delta\in\;]\eta, 1[$.
We claim that 
\begin{equation}\label{ooo}
\Omega_u(\delta)\ss \Omega_\Theta(\eta)\inter \Omega_v(\sigma).
\end{equation}

To this end,  we first show that $|\Theta|<\eta$ on $\Omega_u(\delta) $.  In fact, assuming the contrary, 
there exist $z_0\in \Omega_u(\delta)$ such that $|\Theta(z_0)|\geq \eta$. 
Let $\Omega_0$ be that component of $\Omega_u(\delta)$ containing $z_0$.
By Lemma \ref{compos1} (2), $\inf_{\Omega_0} |u|=0$.  Since $u$ is a factor of $\Theta$, we conclude 
that there  exists $z_1\in \Omega_0\ss \Omega_u(\delta)$ such that $|\Theta(z_1)|<\eta$.
Thus, the connected set $\Omega_0$ meets $\{|\Theta|<\eta\}$ as well as its complement.
Hence $\Omega_0$ meets the topological boundary of $\Omega_\Theta(\eta)$.
Because $\Omega_0\ss\D$, we obtain $z_2\in \Omega_0$ such that $ |\Theta(z_2)|=\eta$.
Hence, by  (ii), $|v(z_2)|\leq \sigma$  and so $|u(z_2)|\geq \delta$
by \zit{defdelta}.
Both assertions $|u(z_2)|\geq \delta$ and $z_2\in \Omega_0\ss \Omega_u(\delta)$  cannot hold.
Thus our assumption right at the beginning of this paragraph was wrong. We deduce that
\begin{equation}\label{omega1}
\Omega_u(\delta)\ss \Omega_\Theta(\eta).
\end{equation}
By continuity, this inclusion implies  that $|\Theta|\leq \eta$ on $\{|u|=\delta\}$. Hence, for $|u(z)|=\delta$, 
\begin{equation}\label{omo2}
 |v(z)|=\frac{|\Theta(z)|}{|u(z)|}\leq \frac{\eta}{\delta}\buildrel=_{}^{\zit{defdelta}}\sigma.
 \end{equation}
Now $\partial \Omega_u(\delta)\inter \D=\{|u|=\delta\}$. If $\Omega$ is a component  of 
$\Omega_u(\delta)$ whose closure belongs to $\D$, then by the maximum principle and \zit{omo2},
$|v|< \sigma$ on $\Omega$. If $E:=\ov \Omega\inter \T\not=\emp$, then  $E$ has measure zero 
by Lemma \ref{compos1} (3).  The maximum principle with exceptional points (see \cite[p. 729] {be} or \cite{gar})
now implies that $|v|<\sigma$ on $\Omega$. Consequently, 
\begin{equation}\label{oop}
\Omega_u(\delta)\ss \Omega_v(\sigma).
\end{equation}
Thus \zit{ooo} holds.  Next we will deduce  that
\begin{equation}\label{oo}
\Omega_v(\eta)\ss \Omega_\Theta(\eta)\ss \Omega_v(\sigma).
\end{equation}
To see this,  observe that the  first inclusion is obvious because $v$ is a factor of $\Theta$. 
To prove the second inclusion, we write the $\eta$-level set of $\Theta$ as
$$\Omega_\Theta(\eta)=\Big(\Omega_\Theta(\eta)\inter \Omega_u(\delta)\Big)\union
\Big(\Omega_\Theta(\eta)\setminus \Omega_u(\delta)\Big).$$
By \zit{oop}, the first set in this union is contained in $\Omega_v(\sigma)$. The second set is also contained in $\Omega_v(\sigma)$, because 
if $|u(z)|\geq \delta$ and $z\in \Omega_\Theta(\eta)$,   then 
\begin{equation}\label{omo3}
 |v(z)|=\frac{|\Theta(z)|}{|u(z)|}<\frac{\eta}{\delta}\buildrel=_{}^{\zit{defdelta}}\sigma.
 \end{equation}

To sum up, we have shown that for every $z\in \Omega_\Theta(\eta)$ we have
$|v(z)|<\sigma$ both in the case where $|u(z)|<\delta$  and $|u(z)|\geq\delta$.
Thus
$$\Omega_\Theta(\eta)\ss \Omega_v(\sigma),$$
and so, \zit{oo} holds.
Using these inclusions \zit{oo},  we are now able to prove that $\Omega_v(\sigma)$ is connected. Assuming the contrary,
there would exist a component $\Omega_1$ of $\Omega_v(\sigma)$, distinct (and so disjoint) from that containing the connected set
 $\Omega_\Theta(\eta)$. In particular, $|v|\geq |\Theta|\geq \eta$ on $\Omega_1$.  By Lemma \ref{compos1} (2),
 $\inf_{\Omega_1} |v| =0$; an obvious contradiction.

Finally we construct an example showing that in (ii) the parameter $\sigma$ cannot be taken to be 1. In fact, let
$v$ be a thin \IBP\ with positive zeros clustering at 1, for example
$$v(z)=\prod_{n=1}^\infty \frac{1-1/n!-z}{1-(1-1/n!)z},$$
 and let $u(z)=S(z):=\exp[-(1+z)/(1-z)]$ be the atomic inner function.
Then, by  \cite[Proposition  2.8]{ci-mo},  $\Theta=uv\in \mathfrak I_c$. However, $v\notin \mathfrak I_c$,
\cite[Corollary 21]{ci-mo}.  Thus, by the main assertion of this Proposition,   $\sigma=\sup_{|\Theta|=\eta}|v|=1$.  (A direct proof of the assertion $\sigma$ can also be given 
using \cite[p.~55]{gs}), by noticing that the boundary of the component $\Omega_\Theta(\eta)$ is a closed curve in $\D\union \{1\}$.)
\end{proof}

\begin{observation}\rm{
We know from \cite[Proposition 2.9]{ci-mo}  that $u,v\in \mathfrak I_c$ implies $uv\in \mathfrak I_c$. Here is an example showing that neither $u$ nor $v$ must belong to $\mathfrak I_c$ for $uv$ to be in $\mathfrak I_c$. In fact, let
$b$ be a thin Blaschke product  with real zeros clustering at $1$ and $-1$ (just consider $b(z)=v(z)v(-z)$, $v$ as above). 
Let $\tilde u:=Sb$ and $\tilde v(z):=S(-z)b(z)$. Then $\Theta:=\tilde u\tilde v\in\mathfrak I_c$, because
$\Theta(z)=\big(S(z) v^2(z)\big) \big(S(-z) v^2(-z)\big)$ is the product of two functions in $\mathfrak I_c$ (same proof as in
\cite[Proposition  11]{ci-mo}), but neither  $\tilde u$ nor $\tilde v$ belong to $\mathfrak I_c$. This can be seen as follows:
since $S(-1)=1$, $\tilde u=Sb$ behaves as $b$ close to $-1$. Thus, for  $\eta$ arbitrarily close to 1, the level set 
$\Omega_{\tilde u}(\eta) $ is contained in a union of pairwise disjoint pseudohyperbolic disks $D_\rho(x_n, \eta^*)$, $n=0,1,2,\cdots$, together with  some tangential disk $D$ at $1$, 
where $x_0=0$ and $x_n$ is the $n$-th negative zero of $b$ (this works similarily as in  \cite[Corollary 21]{ci-mo}
and \cite[Proposition 11]{ci-mo}).
}
\end{observation}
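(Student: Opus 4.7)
The plan is to verify in turn the four claims embedded in the Observation: that $b(z)=v(z)v(-z)$ is a thin Blaschke product, that $\Theta=\tilde u \tilde v$ lies in $\mathfrak I_c$, that $\tilde u=Sb \notin \mathfrak I_c$, and that $\tilde v \notin \mathfrak I_c$.

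For thinness of $b$, the zero sequence splits into $\{x_n\}\cup\{-x_n\}$ with $x_n=1-1/n!$. The product $\prod_{j\neq n}\rho(x_n,x_j)$ tends to $1$ because $v$ itself is thin, while the cross product $\prod_m\rho(x_n,-x_m)$ also tends to $1$: the identity $1-\rho(x_n,-x_m)^2=(1-x_n^2)(1-x_m^2)/(1+x_nx_m)^2$ combined with the Blaschke condition $\sum_m(1-x_m)<\infty$ yields $\sum_m(1-\rho(x_n,-x_m))\lesssim 1-x_n^2\to 0$. A symmetric estimate handles the case where $x_n$ is replaced by $-x_n$, so $b$ is thin.

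To show $\Theta\in\mathfrak I_c$, factor
\[
\Theta(z)=S(z)S(-z)b(z)^2=\bigl(S(z)v(z)^2\bigr)\bigl(S(-z)v(-z)^2\bigr).
\]
The proof of \cite[Proposition 11]{ci-mo}, which describes $\Omega_{Sv}(\eta)$ as a connected chain of pseudohyperbolic disks around the zeros $x_n$ glued through a Stolz-type region at the singular point $1$, transfers with only superficial modifications to $Sv^2$ (doubling the multiplicities merely dilates each disk by a bounded factor). Hence $Sv^2\in\mathfrak I_c$, and by the obvious $z\mapsto -z$ symmetry so does $S(-\cdot)v(-\cdot)^2$. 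The product invariance \cite[Proposition 2.9]{ci-mo} then gives $\Theta\in\mathfrak I_c$.

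For $\tilde u\notin\mathfrak I_c$ I would localize near $-1$. Because $S$ extends holomorphically across $\T\setminus\{1\}$ with $S(-1)=1$, there is a pseudohyperbolic neighborhood $U$ of $-1$ on which $|S|$ is as close to $1$ as one wishes, so the sublevel sets of $\tilde u$ and of $b$ satisfy a two-sided pinching $\{|b|<\eta_1\}\inter U\ss\{|\tilde u|<\eta\}\inter U\ss\{|b|<\eta_2\}\inter U$ with $\eta_1<\eta<\eta_2$ all close together. Because $b$ is thin and its negative zeros $-x_n$ accumulate at $-1$, the standard description of the level sets of a thin Blaschke product used in \cite[Corollary 21]{ci-mo} gives that, for every $\eta$ close to $1$, the set $\{|b|<\eta_1\}\inter U$ decomposes into infinitely many pairwise disjoint pseudohyperbolic disks, one about each $-x_n$, separated by regions where $|b|$ is close to $1$. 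The pinching forces $\Omega_{\tilde u}(\eta)\inter U$ into the same decomposition, so $\Omega_{\tilde u}(\eta)$ is disconnected and $\tilde u\notin\mathfrak I_c$. Since $b$ is even, $\tilde v(z)=S(-z)b(z)=S(-z)b(-z)=\tilde u(-z)$, and the rotation $z\mapsto -z$ preserves $\mathfrak I_c$, so $\tilde v\notin\mathfrak I_c$ as well.

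The main obstacle is the third paragraph: making the pinching of $|\tilde u|$ by $|b|$ uniform on a fixed pseudohyperbolic neighborhood of $-1$, and then converting thinness of $(-x_n)$ into a genuine topological disconnection of $\Omega_{\tilde u}(\eta)$, rather than one that might be bridged by a thin corridor hugging the arc closer to $-1$.
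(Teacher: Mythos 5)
Your route is the same as the paper's: the identical factorization $\Theta=\bigl(Sv^2\bigr)\bigl(S(-\cdot)v(-\cdot)^2\bigr)$ combined with the cited propositions of \cite{ci-mo}, and the same local analysis of $\tilde u=Sb$ near $-1$, where $S$ has modulus close to $1$ and the thinness of the negative zeros splits the level set into pairwise disjoint pseudohyperbolic disks. Your explicit verification that $b=v(\cdot)v(-\cdot)$ is thin, via $1-\rho(x_n,-x_m)^2=(1-x_n^2)(1-x_m^2)/(1+x_nx_m)^2$ and the Blaschke condition, and the reduction of $\tilde v$ to $\tilde u$ through $\tilde v(z)=\tilde u(-z)$ (using that $b$ is even) are correct and usefully make explicit what the Observation leaves implicit.

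The only point where you stop short is the obstacle you flag yourself. You obtain the disk decomposition of the sublevel set only inside a neighbourhood $U$ of $-1$, so a priori two of these pieces could be joined by a path leaving $U$. The paper sidesteps this by asserting a \emph{global} containment: $\Omega_{\tilde u}(\eta)$ lies in the union of the pairwise disjoint disks $D_\rho(x_n,\eta^*)$ about $0$ and the negative zeros together with one tangential disk at $1$; pairwise disjoint open sets each meeting the level set cannot be bridged. Your local version is repaired without globalizing: given $\eta$, choose $\eta_2\in\;]\eta,1[$ and shrink $U$ so that $\inf_U|S|>\eta/\eta_2$; by the standard lower bound for thin Blaschke products off pseudohyperbolic neighbourhoods of their zeros, there is $\eta^*<1$ with $\{|b|<\eta_2\}\ss\Union_k D_\rho(a_k,\eta^*)$, and for $n$ large the closed disk $\ov{D_\rho(-x_n,\eta^*)}$ lies in $U$ and is disjoint from the disks about the other zeros. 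On the circle $\partial D_\rho(-x_n,\eta^*)$ one then has $|\tilde u|=|S|\,|b|\geq (\eta/\eta_2)\,\eta_2=\eta$, so by the maximum principle the component of $\Omega_{\tilde u}(\eta)$ containing the zero $-x_n$ is trapped inside $D_\rho(-x_n,\eta^*)$. Distinct large $n$ therefore give distinct components, $\Omega_{\tilde u}(\eta)$ is disconnected for every $\eta$ close to $1$ (hence for every $\eta$), and $\tilde u\notin\mathfrak I_c$. With this addition your argument is complete and equivalent to the paper's.
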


\section{Composition of  one-component inner functions}

In \cite{ci-mo} we showed that for every finite \BP\ $B$ and $\Theta\in \mathfrak I_c$, the compositions  $S\circ B\in \mathfrak I_c$ and $B\circ \Theta\in \mathfrak I_c$.   Using the following standard Lemma \ref{bder}, we will extend this to arbitrary one-component inner functions.

\begin{lemma}\label{bder}
1) Let $B$ be a Blaschke product with zero sequence  $(a_n)_{n\in\N}$. Then  the following
inequalities hold for  $\xi \in\T\setminus \sing(B)$:
$$|B'(\xi)|=\sum_{n\in\N}\frac{1-|a_n|^2}{|a_n-\xi|^2}\geq
 \frac{1-|a_{n_0}|}{1+|a_{n_0}|}>0, ~ \forall n_0\in \N.$$
 2) If $u$ is an inner function  for which $\sing(u)\not=\T$, then 
 $$\delta_u:=\inf \{|u'(\xi)|:  \xi \in \T\setminus \sing(u)\}>0.$$
\end{lemma}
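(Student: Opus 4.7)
The plan is to rely on the standard logarithmic-derivative representation for inner functions at those boundary points where they extend analytically. For \textbf{Part (1)}, starting from $B = e^{i\theta}\prod_n \varphi_{a_n}$, a direct computation gives
$$\frac{\varphi_a'(z)}{\varphi_a(z)} = \frac{|a|^2-1}{(1-\bar a z)(a-z)}.$$
For $\xi\in\T$, using $\bar\xi = 1/\xi$ one checks that $\bar\xi(1-\bar a\xi)(a-\xi) = -|a-\xi|^2$, so that
$$\frac{\xi\,\varphi_a'(\xi)}{\varphi_a(\xi)} = \frac{1-|a|^2}{|a-\xi|^2} > 0.$$
Summing over $n$ (the series converges absolutely on $\T\setminus\sing(B)$) and using $|\xi|=1=|B(\xi)|$, one obtains $|B'(\xi)| = \sum_n (1-|a_n|^2)/|a_n-\xi|^2$. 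The term-wise estimate $|a_n-\xi|\leq 1+|a_n|$ then delivers the claimed lower bound $(1-|a_{n_0}|)/(1+|a_{n_0}|)$.

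For \textbf{Part (2)}, I would factor a non-constant inner function as $u=BS_\mu$. The Blaschke contribution is treated by Part (1). For the singular part, differentiation under the integral yields $S_\mu'(z)/S_\mu(z) = -\int_\T 2\zeta/(\zeta-z)^2\,d\mu(\zeta)$, and the circle identity $(\zeta-\xi)^2 = -\zeta\xi\,|\zeta-\xi|^2$ (valid for $\xi,\zeta\in\T$) converts this, at $\xi\in\T\setminus\sing(S_\mu)$, into
$$\frac{\xi\,S_\mu'(\xi)}{S_\mu(\xi)} = \int_\T \frac{2\,d\mu(\zeta)}{|\zeta-\xi|^2}.$$
Adding both contributions and using $|\xi|=|u(\xi)|=1$ gives the positive-real formula
$$|u'(\xi)| = \sum_n \frac{1-|a_n|^2}{|a_n-\xi|^2} + \int_\T \frac{2\,d\mu(\zeta)}{|\zeta-\xi|^2}$$
valid on $\T\setminus\sing(u)$. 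Since $u$ is non-constant, either $B$ has some zero $a_{n_0}$---in which case Part (1) already furnishes the uniform lower bound $(1-|a_{n_0}|)/(1+|a_{n_0}|)>0$---or else $\mu\neq 0$, in which case the crude estimate $|\zeta-\xi|\leq 2$ gives the uniform bound $|u'(\xi)|\geq \mu(\T)/2>0$. Either way, $\delta_u>0$.

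The statement is essentially folklore, so there is no real obstacle; the one place that deserves a careful word is the justification that the series and the integral converge absolutely and represent the derivatives in question at each point of the open set $\T\setminus\sing(u)$, so that the formal manipulations above are legitimate. This follows at once from the fact that $u$ extends analytically across $\T\setminus\sing(u)$, so the classical derivatives exist there and may be computed from the factored form term by term.
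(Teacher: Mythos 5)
Your proof is correct. For part (1) you follow essentially the same route as the paper: compute the logarithmic derivative $B'/B$ term by term, use the circle identity to see that each summand $\xi\varphi_{a_n}'(\xi)/\varphi_{a_n}(\xi)$ is the positive quantity $(1-|a_n|^2)/|a_n-\xi|^2$, and bound one term from below. For part (2), however, you take a genuinely different path. The paper invokes Frostman's theorem to choose $a\in\D$ so that $B:=\varphi_a\circ u$ is a Blaschke product, writes $u=\varphi_a\circ B$, and gets $|u'(\xi)|=|\varphi_a'(B(\xi))|\,|B'(\xi)|\geq \frac{1-|a|}{1+|a|}\,\delta_B>0$, thereby reducing everything to part (1). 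You instead use the canonical factorization $u=BS_\mu$ and differentiate the singular factor directly, arriving at the explicit identity $|u'(\xi)|=\sum_n\frac{1-|a_n|^2}{|a_n-\xi|^2}+\int_\T\frac{2\,d\mu(\zeta)}{|\zeta-\xi|^2}$ on $\T\setminus\sing(u)$, from which the uniform lower bound follows by looking at whichever factor is nontrivial (using $|\zeta-\xi|\leq 2$ for the singular part). Your argument is more elementary in that it avoids Frostman's theorem (a nontrivial potential-theoretic input) and yields a clean closed formula for $|u'|$ that makes the positivity transparent; the paper's argument is shorter on the page because it recycles part (1) wholesale. One small point worth keeping explicit, which you do note: the lemma tacitly assumes $u$ non-constant (for a unimodular constant $\delta_u=0$), and both your dichotomy ``$B$ has a zero or $\mu\neq 0$'' and the paper's appeal to part (1) for $\varphi_a\circ u$ rely on this.
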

\begin{proof}
1) Just compute the logarithmic derivative $B'/B$ and note that on $\T\setminus \sing(B)$
the \BP\ $B$ converges.

2) Let $\varphi_a(z)=(a-z)/(1-\ov a z)$.
 By Frostman's theorem (see \cite[p. 79]{ga}) there is $a\in\D$ such that 
$B:=\varphi_a\circ u$ is a Blaschke product. Of course, $\sing(u)=\sing(B)$, 
$u=\varphi_a\circ B$ and $\varphi_a'(z)=- (1-|a|^2)/(1-\ov az)^2$.
 Hence, for $\xi\in \T\setminus \sing(u)$,
\begin{eqnarray*}
|u'(\xi)|&=&|\varphi_a'(B(\xi))|\; |B'(\xi)|\geq \frac{1-|a|^2}{|1-\ov a B(\xi)|^2}\; \delta_B\\
&\geq& \frac{1-|a|}{1+|a|}\;\delta_B>0.
\end{eqnarray*}
\end{proof}

\begin{theorem} \label{compo}
 If $u$ and $v$ are  two non-constant inner functions  in $\mathfrak I_c$, then 
 $u\circ v\in\mathfrak I_c$.

\end{theorem}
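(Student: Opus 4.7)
The plan is to verify Aleksandrov's characterization (Theorem \ref{alexi}) for $\Phi:=u\circ v$. By the chain rule, $\Phi'=(u'\circ v)\,v'$ and $\Phi''=(u''\circ v)\,(v')^2+(u'\circ v)\,v''$; the strategy is to derive the two conditions of Theorem \ref{alexi} for $\Phi$ from the corresponding conditions for $u$ and $v$ individually, using Lemma~\ref{bder} to control $|u'|$ and $|v'|$ from below.

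\emph{Condition (i).} For $\zeta\in\T\setminus\sing(\Phi)$, the function $v$ is holomorphic at $\zeta$, $v(\zeta)\in\T\setminus\sing(u)$, and $u$ is holomorphic at $v(\zeta)$. Plugging Aleksandrov's derivative bound (with constants $C_u,C_v$) for $u$ at $v(\zeta)$ and for $v$ at $\zeta$ into the chain-rule expression yields
$$
|\Phi''(\zeta)|\leq C_u|u'(v(\zeta))|^2|v'(\zeta)|^2+|u'(v(\zeta))|\,C_v|v'(\zeta)|^2 = |\Phi'(\zeta)|^2\Bigl(C_u+\frac{C_v}{|u'(v(\zeta))|}\Bigr).
$$
Since $u\in\mathfrak I_c$ implies $\sing(u)$ has Lebesgue measure zero, Lemma~\ref{bder}~(2) gives $|u'(v(\zeta))|\geq\delta_u>0$, hence $|\Phi''(\zeta)|\leq (C_u+C_v/\delta_u)|\Phi'(\zeta)|^2$.

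\emph{Condition (ii).} Fix $\zeta\in\sing(\Phi)$ and split into two cases. \emph{Case A:} if $\zeta\in\sing(v)$, Aleksandrov for $v$ yields $r_n\to 1^-$ and $\alpha<1$ with $|v(r_n\zeta)|\leq\alpha$; since $u$ is non-constant inner, $M_\alpha:=\sup_{|w|\leq\alpha}|u(w)|<1$ by the maximum principle, and $|u(v(r_n\zeta))|\leq M_\alpha<1$. \emph{Case B:} if $\zeta\notin\sing(v)$, then $v$ is analytic at $\zeta$ with $v(\zeta)\in\T$, and the assumption $\zeta\in\sing(\Phi)$ forces $v(\zeta)\in\sing(u)$. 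By Lemma~\ref{bder}~(2), $|v'(\zeta)|\geq\delta_v>0$, and the classical boundary computation (from $|v|\equiv 1$ on $\T$ near $\zeta$) shows $\lambda:=v'(\zeta)\zeta/v(\zeta)$ is a positive real number. Taylor expansion yields
$$
v(r\zeta)=\rho(r)\,v(\zeta)+O((1-r)^2),\qquad \rho(r):=1-\lambda(1-r),
$$
while $1-|v(r\zeta)|\asymp 1-r$ for $r$ near $1$. Applying Schwarz-Pick $|u'(z)|\leq 1/(1-|z|^2)$ on the segment joining $v(r\zeta)$ to $\rho(r)v(\zeta)$ gives $|u(v(r\zeta))-u(\rho(r)v(\zeta))|=O(1-r)\to 0$. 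Since $\rho(r)\to 1^-$ monotonically and Aleksandrov for $u$ at $v(\zeta)\in\sing(u)$ yields $\liminf_{s\to 1^-}|u(sv(\zeta))|<1$, we conclude $\liminf_{r\to 1^-}|u(v(r\zeta))|<1$.

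The hard part is Case B of condition (ii): the image curve $r\mapsto v(r\zeta)$ is not the radius at $v(\zeta)$, only tangent to it. The crucial observation is that the deviation from the radius is $O((1-r)^2)$ while the distance to $\T$ is of order $1-r$, which is exactly the gap needed for the Schwarz-Pick bound to transport the radial liminf condition of $|u|$ at $v(\zeta)$ onto the curve $v(r\zeta)$.
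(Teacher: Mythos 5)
Your proof is correct, and it follows the paper's overall strategy: verify Aleksandrov's criterion (Theorem \ref{alexi}) for $u\circ v$, with condition (i) handled by exactly the same chain-rule identity and the lower bound $|u'(v(\zeta))|\geq\delta_u>0$ from Lemma \ref{bder}(2). The only real divergence is in Case B of condition (ii), i.e.\ the transfer of $\liminf_{s\to 1}|u(s\,v(\zeta))|<1$ from the radius at $v(\zeta)$ to the curve $r\mapsto v(r\zeta)$. The paper argues geometrically: since $v'(\zeta)\neq 0$, the curve lies in a Stolz cone at $v(\zeta)$, that cone is contained in a union of pseudohyperbolic disks $\bigcup_{x} D_\rho(x,\rho_0)$ along the radius (citing \cite{moru}), and the invariant Schwarz--Pick lemma then bounds $|u|$ on the portion of the curve meeting the disks $D_\rho(r_ne^{i\theta},\rho_0)$ around the good radial points. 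You instead Taylor-expand $v$ at $\zeta$ (legitimate, since $\zeta\notin\sing(v)$ means $v$ is analytic there), obtaining that the curve deviates from the reparametrized radius by only $O((1-r)^2)$ while sitting at depth $\asymp 1-r$ from $\T$, and then apply the Euclidean derivative bound $|u'(z)|\leq 1/(1-|z|^2)$ along the connecting segment. The two arguments are morally the same --- your $O((1-r)^2)$ Euclidean estimate says the pseudohyperbolic distance from the curve to the radius is $O(1-r)$, which is even stronger than the bounded-pseudohyperbolic-distance statement the paper extracts from the cone picture --- but yours is more self-contained, avoiding the geometry of \cite{moru}, at the cost of invoking the standard fact that $\zeta v'(\zeta)/v(\zeta)$ is positive, which is implicit in the computation of Lemma \ref{bder}(1). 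Your Case A is also a harmless variant (maximum principle on $\{|w|\leq\alpha\}$ instead of passing to a convergent subsequence of $v(r_n\zeta)$); both work.
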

\begin{proof}
As in \cite{ci-mo}, we shall use Aleksandrov's criterion \ref{alexi}.

(1) Let $\Theta:=u\circ v$. It is well known that $\Theta$ is an inner function again (see e.g. 
\cite[p.83]{pav}). Now
$$\sing(\Theta)= \sing(v) \union \{\xi\in \T\setminus \sing(v): v(\xi)\in \sing(u)\}.$$
Since $v\in \mathfrak I_c$, $\liminf_{r\to 1} |v(r\zeta)|<1$ for every $\zeta\in\sing(v)$ (Theorem \ref{alexi}). Say $v(r_n\zeta)\to w_0\in \D$.   Then
\begin{equation}
\Theta(r_n\zeta)=u(v(r_n\zeta))\to u(w_0)\in \D.
\end{equation}
If $\xi\in \sing(\Theta)\setminus \sing(v)$, then $v(r\xi)\to v(\xi)= e^{i\theta}\in\sing(u) $ for some $\theta\in  \R$.
By Lemma \ref{bder}, $v'(\xi)\not=0$; hence $v$ is a conformal map in small neighborhoods of $\xi$;
in particular, due to the angle conservation law, the curve $\gamma:r\mapsto v(r\xi)$ stays in a cone with
 aperture $0<2\sigma<\pi$ and  cusp
at $e^{i\theta}\in \sing u$. Since $u\in \mathfrak I_c$,  $\liminf |u(re^{i\theta})|<1$. 
We claim that $\liminf |u(v(r\xi))|<1$, too. To see this, choose a pseudohyperbolic radius $\rho_0$
 so big that for some $r_0\in\; ]0,1[$ the cone 
 $$C:=\{z\in \D: |z|\geq r_0, |\arg z- \theta|<\sigma\}$$ 
is entirely contained in the domain
$$V:=\Union_{-1<x<1}D_\rho(x,\rho_0).$$
        
 \begin{figure}[h]
  \scalebox{.40} 
  {\includegraphics{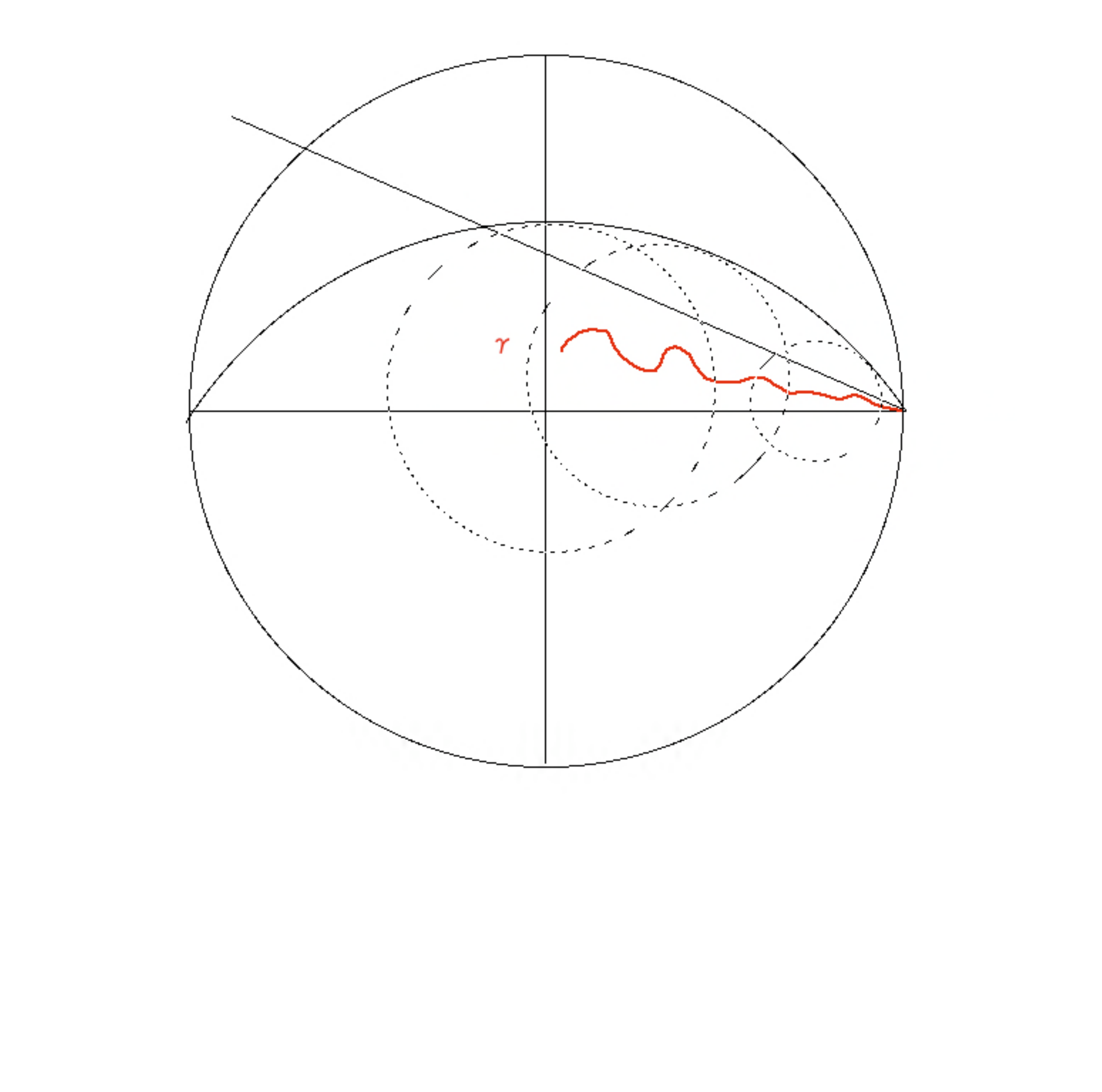}} 
\vspace{-3.5cm}
\end{figure}

Note that by \cite{moru}, the boundary of $V$ is the union of two arcs of circles  cutting
the line $\{se^{i\theta}: s\in \R\}$ at $±e^{i\theta}$ under  an angle $\alpha$ with $\sigma <\alpha <\pi/2$ (see the figure, where we sketched the situation for $\theta=0$).

Choose $r_n$ so that $\lim u(r_n e^{i\theta})=a\in \D$. 
Then the curve $\gamma$ cuts the boundary of infinitely many disks $D(r_ne^{i\theta},\rho_0)$ twice.
But for $z\in D(r_ne^{i\theta},\rho_0)$  we have
$$\frac{|u(z)|-|u(r_ne^{i\theta})|}{1-|u(r_ne^{i\theta})|\, |u(z)|}\leq\rho(u(z), u(r_ne^{i\theta}))\leq \rho(z,r_ne^{i\theta})\leq \rho_0,$$
and so
$$|u(z)|\leq \frac{\rho_0+|u(r_ne^{i\theta})|}{1+|u(r_ne^{i\theta})|\, \rho_0}.$$
This clearly implies that
$$\liminf |u(v(r\xi))|<1.$$
Consequently, $\liminf|\Theta(r\xi)|<1$ for every $\xi\in \sing(\Theta)$.
Next we verify the first condition in Aleksandrov's theorem.

\begin{eqnarray}\label{compos}
A:=\frac{(u\circ v)''}{[(u\circ v)']^2}&=&\frac{u''\circ v}{(u'\circ v)^2}+ \frac{(u'\circ v)}{(u'\circ v)^2}\frac{v''}{{v\,}'^{\,2}}\\\nonumber
&=&\frac{u''\circ v}{(u'\circ v)^2}+ \frac{1}{u'\circ v}\frac{v''}{{v\,}'^{\,2}}.
\end{eqnarray}
If $\zeta\in \T\setminus \sing(u\circ v)$, then  $|v(\zeta)|=1$  and $\xi:=v(\zeta)\not\in \sing(u)$.
Since $u,v\in \mathfrak I_c$, we deduce from Lemma \ref{bder} and Aleksandrov's theorem
\ref{alexi} that  
\begin{eqnarray*}
|A(\zeta)|&\leq& \sup_{\beta\notin\sing u} \frac{|u''(\beta)|}{|u'(\beta)|^2}+ \frac{1}{\delta_u}\sup_{\alpha\notin \sing v}\frac{|v''(\alpha)}{|v'(\alpha)|^{\,2}}=:C<\infty,
\end{eqnarray*}
where  $$\delta_u:=\inf \{|u'(\xi)|:  \xi \in \T\setminus \sing(u)\}.$$
Hence $\Theta\in \mathfrak I_c$.
\end{proof}


\begin{theorem}
1) Let $E\ss\T$ be a closed finite  set. Then 
there exists a one-component inner function $u$ such that for 
some $\eta_0\in \;]0,1[$ (and hence for all $\eta\in\; [\eta_0,1[$) the associated level set $\Omega_u(\eta)$ is connected and 
has the property that 
\begin{equation}\label{spec-lev}
\ov{\Omega_u(\eta)}\inter \T=\sing(u)=E.
\end{equation}
2) There exists $u\in \mathfrak I_c$ such that $\ov{\Omega_u(\eta)}\inter \T=\sing(u)$ is
an infinite set.
\end{theorem}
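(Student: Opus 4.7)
The plan is to handle the two parts by explicit constructions, then verify the claimed properties with the machinery already developed in the paper.

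For part 1, given $E=\{\zeta_1,\dots,\zeta_n\}\ss\T$, I would set $u:=S_\mu$ with $\mu=\sum_{j=1}^n\delta_{\zeta_j}$; then $\sing(u)=E$, and the introduction records that $S_\mu$ with finite spectrum lies in $\mathfrak I_c$, so some $\eta_1\in\,]0,1[$ makes $\Omega_u(\eta_1)$ connected. To upgrade this to every $\eta\in[\eta_1,1[$ I would invoke Lemma~\ref{compos1}(2): any component of $\Omega_u(\eta)$ has $\inf|u|=0$, hence meets the connected subset $\Omega_u(\eta_1)\ss\Omega_u(\eta)$, forcing a single component. The inclusion $\ov{\Omega_u(\eta)}\inter\T\ss\sing(u)$ is automatic for any inner function since on $\T\setminus\sing(u)$ the function $u$ extends analytically with $|u|=1$, and the reverse follows from a direct Poisson-type computation showing $|u(r\zeta_j)|\to 0$ as $r\to 1^-$ for every $j$, so each $\zeta_j$ is a limit of points of $\Omega_u(\eta)$.

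For part 2, I would produce an example by composition. Let $B$ be an infinite interpolating Blaschke product with positive real zeros $(x_n)$ satisfying the pseudohyperbolic spacing $0<\eta_1\leq\rho(x_n,x_{n+1})\leq\eta_2<1$ recalled in the introduction; then $B\in\mathfrak I_c$ and $\sing(B)=\{1\}$. Put $\Theta:=S\circ B$ with $S(z)=\exp\bigl(-(1+z)/(1-z)\bigr)$. Theorem~\ref{compo} gives $\Theta\in\mathfrak I_c$, and the formula derived in its proof yields
\[\sing(\Theta)=\{1\}\cup\{\xi\in\T\setminus\{1\}:B(\xi)=1\}.\]
To see that this latter set is infinite I would pass to the boundary derivative of $B$: the argument $\phi(\theta):=\arg B(e^{i\theta})$ lifts to a strictly increasing smooth function on $(0,2\pi)$ with $\phi'(\theta)=|B'(e^{i\theta})|$, and combining Lemma~\ref{bder}(1) with the Poisson-kernel identity gives
\[\int_0^{2\pi}|B'(e^{i\theta})|\,d\theta=\sum_n\int_0^{2\pi}\frac{1-|x_n|^2}{|x_n-e^{i\theta}|^2}\,d\theta=2\pi\sum_n 1=+\infty.\]
Hence $\phi$ is unbounded on $(0,2\pi)$, so it passes through infinitely many values of $2\pi\Z$, each producing a distinct $\xi\in\T\setminus\{1\}$ with $B(\xi)=1$.

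It remains to verify $\ov{\Omega_\Theta(\eta)}\inter\T=\sing(\Theta)$. The inclusion $\ss$ is again automatic. For $\supseteq$, at each $\xi\in B^{-1}(1)$ Lemma~\ref{bder}(1) gives $B'(\xi)\neq 0$, so $B$ is a local conformal diffeomorphism at $\xi$ and $r\mapsto B(r\xi)$ approaches $1$ non-tangentially in $\D$; since $|S(w)|\to 0$ non-tangentially at $1$, we obtain $|\Theta(r\xi)|\to 0$ and $\xi\in\ov{\Omega_\Theta(\eta)}$ for every $\eta>0$. At $\xi=1$, Theorem~\ref{alexi} applied to $\Theta\in\mathfrak I_c$ yields $\liminf_{r\to 1}|\Theta(r)|<1$, so $1\in\ov{\Omega_\Theta(\eta)}$ once $\eta$ exceeds this liminf.

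The main obstacle I anticipate is the winding/derivative computation showing $B^{-1}(1)\inter\T$ is infinite; everything else reduces to bookkeeping with Lemma~\ref{compos1}, Lemma~\ref{bder} and Theorems~\ref{alexi}, \ref{compo}. A minor point worth being careful about in part~1 is that the same $\eta_0$ must serve both for connectedness and for each $\zeta_j\in E$ to cluster into $\Omega_u(\eta)$; this is painless because the latter condition holds for every $\eta\in\,]0,1[$ as soon as the mass at $\zeta_j$ is positive.
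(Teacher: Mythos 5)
Your proposal is correct and follows essentially the same route as the paper: part 1 uses the identical finite atomic singular inner function, and part 2 uses the composition $S\circ b$, which is precisely the paper's second example, justified there by the same key fact you prove (the argument of $b$ is unbounded near $1$, so $b^{-1}(\{1\})\cap\T$ is infinite); your Poisson-kernel computation of $\int|B'(e^{i\theta})|\,d\theta=\infty$ just makes explicit what the paper delegates to a citation of Garnett. The only cosmetic difference is that the paper's primary example is $b\circ S$ rather than $S\circ b$, but both are covered there and your verifications of \zit{spec-lev} are sound.
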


\begin{proof}
1) Let $E=\{\lambda_1,\dots, \lambda_N\}$ be finite. Then the function
$S_\mu$ given by
$$S_\mu(z)=\prod_{j=1}^N \exp\left(- \frac{\lambda_j+z}{\lambda_j-z}\right)$$
belongs to $\mathfrak I_c$ (by \cite[Corollary 17]{ci-mo}) and satisfies \zit{spec-lev}.

2) Let $E=S^{-1}(1)$ be the countably infinite set of points where the atomic inner function
$S(z)=\exp(-(1+z)/(1-z))$ takes the value $1$, and let $b$ be the \IBP\ with zeros $1-2^{-n}$.
Then $b$ and $S$ belong to $\mathfrak I_c$ (see \cite[Theorem 6]{ci-mo}). By Theorem
\ref{compo}, $u:=b\circ S\in\mathfrak I_c$. It is easy to see that 
$\ov{\Omega_u(\eta)}\inter \T=\sing(u)=E$.  The same holds true for $S\circ b$ as well; 
just note that the argument function of $b$ on $\T\setminus \{1\}$ is unbounded  when
 approaching $1$ from  both sides  on the circle (see \cite[p. 92]{ga}), so that $b^{-1}(\{1\})$ is infinite.
Thus we have a singular inner function in $\mathfrak I_c$ with infinitely many singularities.
\end{proof}

Finally, let us mention that for 
 inner functions $u$, we always have
$$\sing u=\ov{\Omega_u(\eta)}\inter \T,$$
$0<\eta<1$.

\begin{question}\rm
i) Given any countable closed subset $E$ of $\T$, does there exist $u\in\mathfrak I_c$ such that 
$\ov{\Omega_u(\eta)}\inter \T=E$?

ii) Is the set $\ov{\Omega_u(\eta)}\inter \T$ necessarily countable whenever $u\in \mathfrak I_c$?
We don't think so. As indicated by Carl Sundberg \cite{su}, the usual Cantor ternary set may be the support of some singular measure $\mu$
whose associated singular inner function $S_\mu$ belongs to $\mathfrak I_c$. 

iii) Give a description of those closed subsets $E$ of $\T$  such that for some singular inner function $S_\mu$ with $\sing S_\mu=E$  every inner factor  of $S_\mu$ belongs to $\mathfrak I_c$.

For example, finite subsets of $\T$ have this property \cite[Corollary 17]{ci-mo}) .
\end {question}

{\bf Acknowledgements}\\
 
 Le deuxi\`eme  auteur remercie l'UFR MIM (Math\'ematiques-Informatique-M\'ecanique) de l'universit\'e de Lorraine, Campus Metz,  pour lui avoir donn\'e la possibilit\'e, via une mise \`a disposition,  d'enseigner \`a l'Universit\'e du Luxembourg. Un merci aussi \`a l'Universit\'e du Luxembourg pour avoir soutenu cette d\'emarche.

 \end{document}